\flushbottom  \linespread{1.6}
\newcommand{\bmat}[1]{\left[\begin{array}{#1}}
\newcommand{\bbmat}[0]{\end{array}\right]}
\newcommand{\pmat}[0]{\begin{pmatrix}}
\newcommand{\ppmat}[0]{\end{pmatrix}}
\newcommand{\R}[0]{\mathbb{R}}
\newcommand{\N}[0]{\mathbb{N}}
\newcommand{\C}[0]{\mathbb{C}}
\newcommand{\vp}{\varepsilon}
\newtheorem{thm}{Theorem}
\newtheorem{fact}[thm]{Fact}
\newtheorem{lem}[thm]{Lemma}
\theoremstyle{definition}
\newtheorem{defin}[thm]{Definition}
    \title{Moving finite unit tight frames for $S^n$}
    \date{\today}
    \author{Daniel Freeman}
\address{Department of Mathematics and Computer Science\\
St Louis University\\
St Louis, MO 63103  USA} \email{dfreema7@slu.edu}
\author{Ryan Hotovy}
\address{Department of Mathematics, Texas A\&M University\\
College Station, TX 77843, USA} \email{rhotovy@math.tamu.edu}
\author{Eileen Martin}
\address{Department of Mathematics, University of Texas at Austin, 1 University Station
C1200, Austin TX 78712-0257} \email{eileen.r.martin@utexas.edu}
\thanks{
The research of all authors was supported by the ``Wavelets and
Matrix Analysis'' REU at Texas A\&M which was supported by the
National Science Foundation.  The first author was supported by a
grant from the National Science Foundation.}
\thanks{2010 \textit{Mathematics Subject Classification}: 42C15, 57R22}
\begin{document}

\begin{abstract}
Frames for $\R^n$ can be thought of as redundant or linearly
dependent coordinate systems, and have important applications in
such areas as signal processing, data compression, and sampling
theory.  The word ``frame'' has a different meaning in the context
of differential geometry and topology. A moving frame for the
tangent bundle of a smooth manifold is a basis for the tangent space
at each point which varies smoothly over the manifold. It is well
known that the only spheres with a moving basis for their tangent
bundle are $S^1$, $S^3$, and $S^7$. On the other hand, after
combining the two separate meanings of the word ``frame'',  we show
that the $n$-dimensional sphere, $S^n$, has a moving finite unit
tight frame for its tangent bundle if and only if $n$ is odd.  We
give a procedure for creating vector fields on $S^{2n-1}$ for all
$n\in\N$, and we characterize exactly when sets of such vector
fields form a moving finite unit tight frame.
\end{abstract}\maketitle

\section{Introduction}\label{S:1}
When analyzing and processing a signal, such as a sound wave or
digital picture, it is often useful to represent the signal in terms
of a basis.
 For instance, a sound wave can be represented
in terms of its harmonic frequencies by a fourier series. With this
representation, the sound wave can be processed to remove static by
setting to 0 the high frequency coefficients, or be compressed for
smaller storage on a computer by discretizing the coefficients and
setting small coefficients to 0. Representing a function with
respect to a basis is also useful in transmitting signals, as all
that is needed to be sent are the basis coefficients instead of
every value of the function. There is a problem with this
transmission procedure though when error is introduced into the
system.  If one of the basis coefficients is lost or corrupted
during transmission, then an entire dimension is lost and cannot be
recovered.  This is where using a frame instead of a basis can be
useful.  A frame is essentially a redundant coordinate system in
that a frame uses more vectors than necessary when representing a
signal.  However, this redundancy helps to mitigate the error due to
the loss of coordinates through transmission by effectively
spreading the loss over the whole space instead of localizing it to
certain dimensions. Given $n\in\N$, a {\em tight frame for $\R^n$}
is a sequence $(f_i)_{i=1}^k\subset \R^n$ with $k\geq n$ such that
there exists a constant $C>0$ satisfying
\begin{equation}\label{RecForm}
x=\frac{1}{C}\sum_{i=1}^k \langle f_i,x\rangle f_i\quad\textrm{ for
all }x\in\R^n,
\end{equation}
where, $\langle\cdot,\cdot\rangle$ is the standard inner product on
$\R^n$ (the dot product). Thus, a frame allows for the exact
reconstruction of a vector from the sequence of frame coefficients
$\left(\langle f_i,x\rangle\right)_{i=1}^k$.
 We call a tight frame $(f_i)_{i=1}^k$ a {\em finite unit tight
frame, or FUNTF}, if $\|f_i\|=1$ for all $1\leq i\leq k$.  Note that
in the case $k=n$, a sequence $(f_i)_{i=1}^n\subset\R^n$ is a FUNTF
if and only if it is an ortho-normal basis. FUNTFs are the most
useful frames for signal processing, as they minimize mean squared
error due to noise \cite{GKK}.  This has motivated the study of
FUNTFs and in particular has led to interest in finding ways to
construct FUNTFs. In \cite{BF},  using a potential energy concept
inspired from physics, it is proven that there exists a finite unit
tight frame of $k$ vectors for $\R^n$ for all natural numbers $k\geq
n$. An explicit construction for FUNTFs is given in \cite{DFKLOW},
and in \cite{CFHWZ} the method of spectral tetris is introduced to
explicitly construct FUNTFs as well other types of frames.

In the context of differential geometry and topology, the word
``frame'' has a different meaning.  A moving frame for the tangent
bundle of a smooth manifold can be thought of as a basis for the
tangent space at each point on the manifold which moves continuously
over the manifold.  To avoid confusion, we will refer to moving
frames in this context as moving bases. If $n\in\N$ and $M^n$ is an
$n$-dimensional smooth manifold, then a {\em vector field} is a
continuous function $f:M^n\rightarrow T M^n$ such that $f(p)\in T_p
M^n$ for all $p\in M$.  That is, a vector field continuously assigns
to each point $p\in M^n$, a vector $f(p)$ in the tangent space $T_p
M$. Thus, a moving basis for the tangent bundle of a $n$-dimensional
smooth manifold $M^n$ is a sequence of vector fields $(f_i)_{i=1}^n$
such that $(f_i(p))_{i=1}^n$ is a basis for $T_p M$ for all $p\in
M^n$.

Of particular interest in the theory of vector fields and moving
bases is the case where the underlying manifold $M^n$ is the
$n$-dimensional sphere $S^n$. Note that $S^1$ can be represented by
the unit circle in $\R^2$ and that $S^2$ can be represented by the
unit sphere in $\R^3$. The famous Hairy Ball Theorem states that if
$n$ is even then there does not exist a nowhere zero vector field
for $S^n$.  As a moving basis for $S^n$ would consist of $n$ nowhere
zero vector fields, the manifold $S^n$ cannot have a moving basis
when $n$ is even.  On the other hand, it can be simply proven that
$S^1$, $S^3$, and $S^7$ all have a moving basis for their tangent
space. To give a brief sketch of this proof, we identify $S^1$ with
the unit circle of the complex numbers $\C$, $S^3$ with the unit
sphere of the quaternions $\mathbb{H}$, and $S^7$ with the unit
sphere of the octonions $\mathbb{O}$. Each of these division
algebras contain the real numbers, so in particular we have that
$1\in S^1\subset\C$, $1\in S^3\subset\mathbb{H}$, and $1\in
S^7\in\mathbb{O}$. Now for each of the cases $n=1,3,7$, we can pick
a basis $(v_i)_{i=1}^n$ for the tangent space $T_1 S^n$. To move
this basis $(v_i)_{i=1}^n\subset T_1 S^n$ from $1\in S^n$ to a
different point $a\in S^n$, we simply multiply each vector in the
basis by $a$ giving $(a v_i)_{i=1}^n\subset T_a S^n$. This gives a
moving basis for $S^n$ for the cases $n=1,3,7$. This proof will not
work for other values of $n$ as $\C$, $\mathbb{H}$, and $\mathbb{O}$
are the only finite dimensional division algebras over $\R$ besides
$\R$ itself.  The question of does $S^n$ have a moving basis for its
tangent bundle for any other values of $n$ was an important open
problem in differential topology and was solved negatively in 1958
by Michel Kervaire and independently by Raoul Bott and John Milnor.

$S^n$ has a basis for its tangent space at each point which moves
smoothly over $S^n$ if and only if $n\in\{1,3,7\}$.  This motivates
us to question what happens when we weaken the condition of basis to
that of finite unit tight frame.  By combining the two definitions
of the word ``frame'', we are led to studying tight frames which
vary smoothly over a manifold.  As the reconstruction formula
(\ref{RecForm}) uses an inner product, we need to consider
Riemannian manifolds, where the Riemannian metric gives a smoothly
varying inner product for the tangent space at each point on the
manifold.  Note that any smooth submanifold of $\R^n$ is naturally a
Riemannian manifold where the Riemannian metric is given by the
standard inner product on $\R^n$.
\begin{defin}\label{frame}
Let $M^n$ be an $n$-dimensional Riemannian manifold with Riemannian
metric $\langle\cdot,\cdot\rangle_a$ for each $a\in M^n$. Let $k\geq
n$, and $f_i:M\rightarrow T M$ be a vector field for all $1\leq
i\leq k$. We say that $(f_i)_{i=1}^k$ is a {\em moving tight frame}
for the tangent bundle of $M^n$ if $(f_i(a))_{i=1}^k$ is a tight
frame for $T_a M^n$ for all $a\in M^n$.  That is, for all $a\in
M^n$, there exists $C>0$ such that
$$x=\frac{1}{C}\sum_{i=1}^k \langle x,f_i(a)\rangle_a f_i(a)\qquad\textrm{ for all
}x\in\pi^{-1}(a).
$$
We say that $(f_i)_{i=1}^k$ is a {\em moving finite unit tight frame
(FUNTF)} for the tangent bundle of $M^n$ if it is a moving tight
frame and $\|f_i(a)\|=\sqrt{\langle f_i(a),f_i(a)\rangle}=1$ for all
$1\leq i\leq k$ and $a\in M^n$.
\end{defin}

If $n\in\N$ is even, then the sphere $S^n$ does not have a nowhere
zero vector field.  As a moving FUNTF consists of unit vector
fields, no even dimensional sphere can have a moving FUNTF for its
tangent bundle. On the other hand, we will show that every odd
dimensional sphere has a moving FUNTF for its tangent bundle.

The concept of a moving tight frame was first applied in 2009 when
P. Kuchment proved that particular vector bundles over the torus,
which arise in mathematical physics, have natural moving tight
frames but do not have moving bases \cite{K}.  In \cite{FPWW}, it
was shown that the dilation theorem for tight frames in $\R^n$ can
be extended to moving tight frames.  The relationship between frames
for Hilbert spaces and manifolds was also considered in a different
context by Dykema and Strawn, who studied the manifold structure of
collections of FUNTFs under certain equivalent classes \cite{DySt}.

For terminology and background on vector bundles and smooth
manifolds see \cite{L}. For terminology and background on frames for
Hilbert spaces see \cite{C} and \cite{HKLW}.

Most of the research contained in this paper was conducted at the
2010 Research Experience for Undergraduates in Matrix Analysis and
Wavelets organized by Dr David Larson.  The first author was a
research mentor for the program, and the second and third authors
were participants.  We sincerely thank Dr Larson for his advice and
encouragement.

\section{Tight frames}\label{S:1}

Before discussing moving tight frames, we give some basic results
about how to check whether or not a sequence of vectors is a tight
frame for a Hilbert space.  For $n\in\N$, we denote the unit vector
basis for $\R^n$ by $(e_i)_{i=1}^n$.  As the reconstruction formula
(\ref{RecForm}) is linear, it holds for all $x\in\R^n$ if and only
if it holds for the vectors in the orthonormal basis
$(e_i)_{i=1}^n$. Furthermore, if $1\leq p\leq n$ and $x\in\R^n$,
then $x=e_p$ if and only if $\langle x, e_q\rangle=\delta_{p,q}$ for
all $1\leq q\leq n$.  This leads us to the following fact.

\begin{fact}\label{factBasis}
Let $(f_i)_{i=1}^k\subset\R^n$.  The sequence $(f_i)_{i=1}^k$ is a
tight frame for $\R^n$ if and only if there exists a constant $C>0$
such that
$$\frac{1}{C}\sum_{i=1}^k\langle f_i, e_p\rangle\langle
f_i,e_q\rangle=\delta_{p,q}\quad\quad\textrm{ for all }1\leq p,q\leq
n.
$$
\end{fact}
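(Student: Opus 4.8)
The plan is to unwind the reconstruction formula \eqref{RecForm} by testing it on basis vectors, exactly as the paragraph preceding the statement suggests. First I would observe that the map $x \mapsto \frac{1}{C}\sum_{i=1}^k \langle f_i,x\rangle f_i$ is linear in $x$, so the identity $x=\frac{1}{C}\sum_{i=1}^k \langle f_i,x\rangle f_i$ holds for all $x \in \R^n$ if and only if it holds for each of the basis vectors $x=e_p$, $1\le p\le n$. Next, for a fixed $p$, I would expand both sides in the orthonormal basis $(e_q)_{q=1}^n$: a vector $y\in\R^n$ equals $e_p$ precisely when $\langle y,e_q\rangle=\delta_{p,q}$ for all $q$. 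Applying this with $y=\frac{1}{C}\sum_{i=1}^k \langle f_i,e_p\rangle f_i$ and computing $\langle y,e_q\rangle = \frac{1}{C}\sum_{i=1}^k \langle f_i,e_p\rangle\langle f_i,e_q\rangle$ gives exactly the stated scalar condition.

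So the forward direction runs: if $(f_i)_{i=1}^k$ is a tight frame with constant $C$, then \eqref{RecForm} holds in particular for $x=e_p$, and taking the inner product of \eqref{RecForm} with $e_q$ yields $\frac{1}{C}\sum_{i=1}^k \langle f_i,e_p\rangle\langle f_i,e_q\rangle = \langle e_p,e_q\rangle = \delta_{p,q}$. Conversely, if such a $C>0$ exists with $\frac{1}{C}\sum_{i=1}^k \langle f_i,e_p\rangle\langle f_i,e_q\rangle=\delta_{p,q}$ for all $p,q$, then for each fixed $p$ the vector $\frac{1}{C}\sum_{i=1}^k \langle f_i,e_p\rangle f_i$ has the same coordinates as $e_p$ against every $e_q$, hence equals $e_p$; by linearity \eqref{RecForm} then holds for every $x\in\R^n$, so $(f_i)_{i=1}^k$ is a tight frame.

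There is really no substantive obstacle here — the statement is a routine reformulation, and the only thing to be careful about is bookkeeping: using symmetry of the inner product so that $\langle f_i, e_p\rangle\langle f_i,e_q\rangle$ appears symmetrically, and making sure the same constant $C$ is used throughout (it is the frame constant, independent of $x$). I would present the two directions in two short paragraphs as above, citing the linearity of \eqref{RecForm} and the characterization of $e_p$ by its inner products with the $e_q$'s, both of which are noted in the text immediately before the statement.
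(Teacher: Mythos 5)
Your proposal is correct and follows exactly the argument the paper sketches in the paragraph preceding the statement: reduce to basis vectors by linearity of \eqref{RecForm}, then characterize $e_p$ by its inner products with the $e_q$'s. The paper offers no separate proof beyond that sketch, so your write-up is simply a careful expansion of the same approach.
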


Given $n\in\N$ and $a\in S^{2n-1}\subset\R^{2n}$, we will be
interested in identifying when a sequence $\{f_i(a)\}_{i=1}^k$ is a
tight frame for the subspace $T_a S^{2n-1}\subset \R^{2n}$. However,
Fact \ref{factBasis} may only be used to check if a sequence of
vectors is a tight frame for the entire space, not just a subspace.
 We will use the following fact to get around this problem.

\begin{fact}\label{FactUnion}
Let $n\in\N$ and $X,Y\subset \R^n$ such that $X=Y^\bot$.  If
$(x_i)_{i=1}^k\subset X$ is a tight frame for $X$ with frame
constant $C>0$ and $(y_i)_{i=1}^\ell\subset Y$, then
$(y_i)_{i=1}^\ell$ is a tight frame for $Y$ with frame constant $C$
if and only if $(x_1,...,x_k,y_1,...,y_\ell)$ is a tight frame for
$\R^n$ with frame constant $C$.
\end{fact}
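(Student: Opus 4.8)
The plan is to exploit the orthogonal direct sum decomposition $\R^n = X\oplus Y$. Since $X = Y^\bot$ and we are working in finite dimensions, we also have $Y = (Y^\bot)^\bot = X^\bot$, so every $z\in\R^n$ has a unique decomposition $z = P_X z + P_Y z$ with $P_X z\in X$, $P_Y z\in Y$, and $\langle P_X z, P_Y z\rangle = 0$, where $P_X, P_Y$ denote the orthogonal projections onto $X$ and $Y$. The one elementary observation that drives everything is that for any $x\in X$ we have $\langle x,z\rangle = \langle x, P_X z\rangle$ and for any $y\in Y$ we have $\langle y,z\rangle = \langle y, P_Y z\rangle$, since in each case the cross term vanishes by orthogonality.

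First I would use these identities to rewrite the candidate reconstruction sum for $(x_1,\dots,x_k,y_1,\dots,y_\ell)$ at an arbitrary $z\in\R^n$:
$$\frac{1}{C}\sum_{i=1}^k\langle x_i,z\rangle x_i + \frac{1}{C}\sum_{i=1}^\ell\langle y_i,z\rangle y_i = \frac{1}{C}\sum_{i=1}^k\langle x_i,P_X z\rangle x_i + \frac{1}{C}\sum_{i=1}^\ell\langle y_i,P_Y z\rangle y_i.$$
Because $(x_i)_{i=1}^k$ is a tight frame for $X$ with constant $C$ and $P_X z\in X$, the first sum on the right-hand side equals $P_X z$. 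Hence the full sum equals $z = P_X z + P_Y z$ for every $z\in\R^n$ if and only if $\frac{1}{C}\sum_{i=1}^\ell\langle y_i,P_Y z\rangle y_i = P_Y z$ for every $z\in\R^n$.

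To finish I would observe that, as $z$ ranges over $\R^n$, the vector $w := P_Y z$ ranges over all of $Y$ (it suffices to let $z$ run through $Y$ itself), and for $w\in Y$ we have $\langle y_i, w\rangle = \langle y_i, P_Y w\rangle$. Therefore the displayed identity holds for all $z\in\R^n$ precisely when $w = \frac{1}{C}\sum_{i=1}^\ell\langle y_i,w\rangle y_i$ for all $w\in Y$, which is exactly the assertion that $(y_i)_{i=1}^\ell$ is a tight frame for $Y$ with frame constant $C$. Since this argument is a chain of equivalences, it yields both directions at once.

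I do not expect a genuine obstacle here; the proof is essentially bookkeeping with orthogonal projections. The only points requiring a little care are: making sure the \emph{same} constant $C$ appears throughout (this is forced by the hypothesis that $(x_i)_{i=1}^k$ already has frame constant $C$, and by the linearity of the reconstruction formula it cannot be rescaled independently on the two pieces), and recording that each $x_i$ genuinely lies in $X$ and each $y_i$ genuinely lies in $Y$ so that the cross-term cancellations are valid — both of which are given in the hypotheses.
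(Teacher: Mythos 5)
Your proof is correct. The paper states Fact~\ref{FactUnion} without any proof at all, so there is no argument to compare against; your decomposition $z = P_X z + P_Y z$ together with the vanishing of the cross terms $\langle x_i, P_Y z\rangle$ and $\langle y_i, P_X z\rangle$ is the natural argument the authors presumably had in mind, and your chain of equivalences correctly delivers both directions with the same constant $C$ throughout.
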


To apply Fact \ref{FactUnion}, we need to know the frame constant
for a given finite unit tight frame.  This is given by the following
fact, which can be deduced from Fact \ref{factBasis} by summing over
$1\leq p=q\leq n$.

\begin{fact}\label{FactCoef}
Let $n,k\in\N$ such that $k\geq n$. If $(f_i)_{i=1}^k\subset \R^n$
is a finite unit tight frame for $\R^n$ then $\frac{k}{n}$ is the
frame constant for $(f_i)_{i=1}^k$.
\end{fact}

\section{A moving FUNTF for $S^{2n-1}$}

We identify the sphere $S^n$ with the unit vectors in $R^{n+1}$.
That is, $S^n=\{(a_1,...,a_{n+1})\in\R^{n+1}:\sum_{i=1}^{n+1}
a_i^2=1\}$. With this representation, it is very easy to identify
the tangent space $T_a S^n$ for each $a\in S^n$.  We first note that
if $a\in S^n$, then the vector $a$ is normal to the tangent space
$T_a S^n$.  Thus, as $S^n$ is a manifold of 1 less dimension than
$R^{n+1}$, if $a\in S^{n}$, then a vector $b\in\R^{n+1}$ is in the
tangent space $T_a S^n$ if and only if $b$ is orthogonal to $a$.
That is, if $a=(a_1,...,a_{n+1})\in S^{n}$ and
$b=(b_1,...,b_{n+1})\in\R^{n+1}$ then $b\in T_a S^n$ if and only if
$\langle a, b\rangle=\sum_{i=1}^{n+1}a_i b_i=0$.  For example, in
the case of the circle, if $(a_1,a_2)\in S^1\subset\R^2$ then
$(-a_2, a_1)$ is the unit vector in the tangent line
$T_{(a_1,a_2)}S^1$ which points counter-clockwise.

\begin{figure}
\includegraphics[width=3in, trim=1.7in 6.6in 1.7in 0.8in, clip=true]{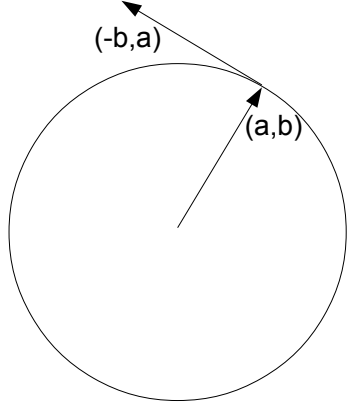}
\label{circle}
\caption{A unit-length tangent vector can be created at $(a,b) \in S^1$ by permuting the coordinates and multiplying one coordinate by -1 to give the vector $(-b,a)$.}
\end{figure}

Thus,  we can create a unit vector field for $S^1\subset \R^2$ by
switching the two coordinates and including a negative sign in the
first coordinate.  This method can be used to create vector fields
for higher dimensional spheres as well.  Given any $n\in\N$, we
create a unit vector field for $S^{2n-1}\subset\R^{2n}$ by switching
pairs of coordinates and inserting a negative sign into one
coordinate of each pair. For example, the map
$(a_1,a_2,a_3,a_4)\mapsto (a_2,-a_1,a_4,-a_3)$ is a unit vector
field on $S^{3}$.  To formalize this, if $(\vp_i)_{i=1}^{2n}\in
\{-1,1\}^{2n}$ and $(k_i)_{i=1}^{2n}\in\{1,2,...,2n\}^{2n}$ then we
define $U_{(\vp_i,k_i)_{i=1}^{2n}}:\R^{2n}\rightarrow\R^{2n}$ to be
the operator defined by
$U_{(\vp_i,k_i)_{i=1}^{2n}}(a_1,...,a_{2n})=(\vp_{k_1}
a_{k_1},...,\vp_{k_{2n}} a_{k_{2n}})$. As we are only interested in
$U_{(\vp_i,k_i)_{i=1}^{2n}}(a_1,...,a_{2n})$ when it switches pairs
of coordinates and multiplies one of the coordinates in each pair by
$-1$, we restrict ourselves to the set,
$$\mathbf{A}_{2n}:=\left\{U_{(\vp_i,k_i)_{i=1}^{2n}}:\R^{2n}\rightarrow\R^{2n}:\vp_i=-\vp_{k_i},\,
i=k_{k_i}\textrm{ for all }1\leq i\leq 2n\right\}.
$$
Note that the condition that $\vp_i=-\vp_{k_i}$ guarantees in
particular that $i\neq k_i$ for all $1\leq i\leq 2n$.  Thus, the set
$\mathbf{A}_{2n}$ is exactly the collection of operators which
switches pairs of coordinates and multiplies one of the coordinates
in each pair by $-1$.  The next lemma shows that the operators in
$\mathbf{A}_{2n}$ can be used to construct tangent vectors.
\begin{lem}\label{LemmaTangentSphere}
Let $n\in\N$, $U_{(\vp_i,k_i)_{i=1}^{2n}}\in \mathbf{A}_{2n}$, and
$a=(a_1,...,a_{2n})\in S^{2n-1}\subset\R^{2n}$.  Then,
$U_{(\vp_i,k_i)_{i=1}^{2n}}(a)\in T_a S^{2n-1}$.
\end{lem}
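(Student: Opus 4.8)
The plan is to verify directly that $U := U_{(\vp_i,k_i)_{i=1}^{2n}}(a)$ is orthogonal to $a$, since by the discussion preceding the lemma a vector $b\in\R^{2n}$ lies in $T_a S^{2n-1}$ precisely when $\langle a,b\rangle = 0$ (and $a\in S^{2n-1}$ is given). So the whole task reduces to computing the inner product $\langle a, U_{(\vp_i,k_i)_{i=1}^{2n}}(a)\rangle$ and showing it vanishes.

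First I would write out the inner product componentwise. By definition the $j$-th coordinate of $U_{(\vp_i,k_i)_{i=1}^{2n}}(a)$ is $\vp_{k_j} a_{k_j}$, so
\[
\langle a, U_{(\vp_i,k_i)_{i=1}^{2n}}(a)\rangle = \sum_{j=1}^{2n} a_j \vp_{k_j} a_{k_j}.
\]
The key structural fact is that the defining conditions of $\mathbf{A}_{2n}$, namely $\vp_i = -\vp_{k_i}$ and $i = k_{k_i}$, say that the index map $i\mapsto k_i$ is an involution with no fixed points (as the remark after the definition notes, $\vp_i=-\vp_{k_i}$ forces $i\neq k_i$), so $\{1,\dots,2n\}$ partitions into $n$ disjoint transposed pairs $\{i, k_i\}$. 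I would then group the sum over these pairs: for each pair $\{p,q\}$ with $q = k_p$ and $p = k_q$, the two corresponding terms are $a_p \vp_{q} a_{q}$ and $a_q \vp_{p} a_{p}$, whose sum is $a_p a_q(\vp_p + \vp_q)$. But the condition $\vp_p = -\vp_{k_p} = -\vp_q$ gives $\vp_p + \vp_q = 0$, so each pair contributes $0$, and hence the entire sum is $0$.

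The only mildly delicate point — the step I'd flag as the main thing to get right rather than an obstacle — is the bookkeeping that $i\mapsto k_i$ genuinely is a fixed-point-free involution and that summing over $j$ is the same as summing once over each unordered pair $\{j,k_j\}$ picking up both orderings; once that is set up cleanly the cancellation $\vp_p + \vp_q = 0$ is immediate. I would phrase this by letting $P$ be a set of representatives, one from each pair, and rewriting $\sum_{j=1}^{2n} a_j\vp_{k_j}a_{k_j} = \sum_{p\in P}\big(a_p\vp_{k_p}a_{k_p} + a_{k_p}\vp_{p}a_{p}\big) = \sum_{p\in P} a_p a_{k_p}(\vp_{k_p}+\vp_p) = 0$. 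This shows $U_{(\vp_i,k_i)_{i=1}^{2n}}(a)\perp a$, hence $U_{(\vp_i,k_i)_{i=1}^{2n}}(a)\in T_a S^{2n-1}$, completing the proof.
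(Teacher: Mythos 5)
Your proposal is correct and uses essentially the same argument as the paper: both reduce the lemma to showing $\langle a, U(a)\rangle=0$ and obtain the cancellation from the conditions $k_{k_i}=i$ and $\vp_i=-\vp_{k_i}$, which force each pair of terms $a_pa_q(\vp_p+\vp_q)$ to vanish. The paper organizes the same cancellation by writing $2\langle U(a),a\rangle$ as the sum plus its reindexed copy rather than by summing over a set of pair representatives, but this is only a difference in bookkeeping.
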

\begin{proof}
Recall that $U_{(\vp_i,k_i)_{i=1}^{2n}}(a)\in T_a S^{2n-1}$ if and
only if $\langle U_{(\vp_i,k_i)_{i=1}^{2n}}(a), a\rangle=0$.  We
have that,
$$2\langle U_{(\vp_i,k_i)_{i=1}^{2n}}(a),
a\rangle=2\sum_{i=1}^{2n}\vp_{k_i} a_{k_i} a_i=\sum_{i=1}^{2n}\vp_{k_i} a_{k_i} a_i +\vp_{k_{k_i}} a_{k_{k_i}} a_{k_i}=\sum_{i=1}^{2n}\vp_{k_i} a_{k_i} a_i +\vp_{i} a_{i} a_{k_i}=0.\\
$$
Thus, $\langle U_{(\vp_i,k_i)_{i=1}^{2n}}(a), a\rangle=0$ and hence
  $U_{(\vp_i,k_i)_{i=1}^{2n}}(a)\in T_a S^{2n-1}$.  The vector
$U_{(\vp_i,k_i)_{i=1}^{2n}}(a)$ is formed by permuting the
coordinates of $a$ and multiplying half the coordinates by $-1$.
None of these operations changes the norm, and thus
$\|U_{(\vp_i,k_i)_{i=1}^{2n}}(a)\|=1$ as $\|a\|=1$.
\end{proof}
 By
 Lemma~\ref{LemmaTangentSphere}, if $U\in \mathbf{A}_{2n}$, then we may
define a unit vector field $f_{U}:S^{2n-1}\rightarrow T S^{2n-1}$
 by $f_{{U}}(a):=
U(a)$ for all $a\in S^{2n-1}$.  In the introduction we used division
algebras over $\R$ to prove that $S^1$, $S^3$, and $S^7$ each have a
moving basis for their tangent bundle. However, for $n\in\{1,3,7\}$
it is also possible to choose $A\subset A_{2n}$ such that
$\{f_U\}_{U\in A}$ is a moving orthonormal basis for $S^n$. Indeed,
if $(a_1,a_2)\in S^1$ then $\{(-a_2,a_1)\}$ is an orthonormal basis
for $T_{(a_1,a_2)}S^1$, and if $(a_1,a_2,a_3,a_4)\in S^3$ then
$\{(-a_2,a_1,a_4,-a_3),(-a_3,-a_4,a_1,a_2),(-a_4,a_3,-a_2,a_1)\}$ is
an orthonormal basis for $T_{(a_1,a_2,a_3,a_4)}S^3$. Constructing a
moving orthonormal basis for $T S^7$ can be done similarly.  As no
other sphere has a moving basis for its tangent bundle, we now focus
on moving FUNTFs. A FUNTF is able to reconstruct every vector in
$\R^n$ exactly, and hence the vectors that comprise a FUNTF are in
this respect, distributed uniformly across $\R_n$.  Thus, it is
natural to assume that wether or not a set of vector fields of the
form $\{f_U\}_{U\in A}$ is a moving FUNTF depends on wether or not
the set $A$ in some respect uniformly switches pairs of coordinates
and in some respect uniformly inserts negative signs. With this in
mind, we introduce the following definition.
\begin{defin}
Let $n\in\N$ and $A\subset \mathbf{A}_{2n}$.  We say that $A$ is
{\em balanced} if the following two conditions are satisfied for all
$1\leq p,q\leq 2n$ with $p\neq q$.
\begin{itemize}
\item[i)] $\# \left\{U_{(\vp_i,k_i)_{i=1}^{2n}}\in A\,|\,
k_p=q\right\}=\frac{\# A}{2n-1}$,\\
and for all $1\leq r,s\leq 2n$ with $p,q,r,\textrm{ and } s$ all
being distinct integers,
\item[ii)] $\# \left\{U_{(\vp_i,k_i)_{i=1}^{2n}}\in A\,|\,
\vp_p \vp_q=-1,\,
\{k_r,k_s\}=\{p,q\}\right\}=\#\left\{U_{(\vp_i,k_i)_{i=1}^{2n}}\in
A\,|\,
\vp_p \vp_q=1,\,  \{k_r,k_s\}=\{p,q\}\right\}$.\\
\end{itemize}
\end{defin}
 For the sake
of convenience, if $A\subset\mathbf{A}_{2n}$ and $1\leq p,q\leq 2n $
are distinct integers then we denote $A_{p,q}=
\left\{U_{(\vp_i,k_i)_{i=1}^{2n}}\in A\,|\, k_p=q\right\}$, and if
$\vp\in\{-1,1\}$ and $1\leq p,q,r,s\leq 2n$ are distinct integers
then we denote $A_{p,q,r,s,\vp}=\left\{U_{(\vp_i,k_i)_{i=1}^{2n}}\in
A\,|\, \vp_p \vp_q=\vp,\, \{k_r,k_s\}=\{p,q\}\right\}$.

 Verifying that a
particular set $A\subset \mathbf{A}_{2n}$ is balanced requires
checking many different equations, and it is not immediately obvious
that balanced sets always exist. However, the following lemma shows
that the entire set $\mathbf{A}_{2n}$ itself is balanced.

\begin{lem}\label{LemmaBalanced}
For all $n\in\N$, the set $\mathbf{A}_{2n}$ is balanced.
\end{lem}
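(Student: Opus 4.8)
The plan is to verify conditions (i) and (ii) of the definition of balanced directly for $A = \mathbf{A}_{2n}$, exploiting the large symmetry group acting on $\mathbf{A}_{2n}$. The key observation is that the symmetric group $S_{2n}$ acts on $\mathbf{A}_{2n}$ by conjugation: if $\sigma\in S_{2n}$ and $P_\sigma$ denotes the corresponding coordinate permutation operator on $\R^{2n}$, then $U\mapsto P_\sigma U P_\sigma^{-1}$ carries $\mathbf{A}_{2n}$ bijectively to itself (conjugating a ``swap one pair, negate one coordinate'' operator by a coordinate permutation yields another such operator, with the pair and the sign location relabeled by $\sigma$). Likewise, for each index $p$ there is a sign-flip symmetry $D_p$ (negate the $p$-th coordinate) which conjugates $\mathbf{A}_{2n}$ to itself and toggles the parity of $\vp$-products involving $p$. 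These symmetries let me reduce every count appearing in the definition to a single orbit computation.

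For condition (i), I would argue as follows. First count $\#\mathbf{A}_{2n}$ exactly: an element of $\mathbf{A}_{2n}$ is determined by a perfect matching of $\{1,\dots,2n\}$ into $n$ unordered pairs, together with, for each pair, a choice of which of its two coordinates receives the $-1$ (the other receives $+1$, forced by $\vp_i = -\vp_{k_i}$). Hence $\#\mathbf{A}_{2n} = (2n-1)!!\cdot 2^n$, where $(2n-1)!! = (2n-1)(2n-3)\cdots 3\cdot 1$ is the number of perfect matchings. Next, for fixed distinct $p,q$, count $\#(\mathbf{A}_{2n})_{p,q} = \#\{U\in\mathbf{A}_{2n}\mid k_p = q\}$: the condition $k_p = q$ forces $p$ and $q$ to be matched to each other, leaves a perfect matching of the remaining $2n-2$ indices (there are $(2n-3)!!$ of these), with $2^{n-1}$ sign choices on those pairs, and exactly one forced sign choice on the pair $\{p,q\}$ (namely $\vp_q = -\vp_p$ with $\vp_p$ free — wait, that is $2$ choices; but $k_p = q$ only pins the pairing, so in fact there are $2$ sign choices on $\{p,q\}$ as well). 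Thus $\#(\mathbf{A}_{2n})_{p,q} = (2n-3)!!\cdot 2^n$, and
$$
\frac{\#(\mathbf{A}_{2n})_{p,q}}{\#\mathbf{A}_{2n}} = \frac{(2n-3)!!\,2^n}{(2n-1)!!\,2^n} = \frac{1}{2n-1},
$$
which is exactly condition (i). Independence of the answer from the particular choice of $p,q$ is guaranteed by the $S_{2n}$-symmetry, so no separate case analysis is needed.

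For condition (ii), fix distinct $p,q,r,s$ and compare $\#(\mathbf{A}_{2n})_{p,q,r,s,1}$ with $\#(\mathbf{A}_{2n})_{p,q,r,s,-1}$, where the constraint is $\{k_r,k_s\} = \{p,q\}$ (equivalently $r,s$ are matched to $p,q$ in some order) and we split by the sign of $\vp_p\vp_q$. The constraint $\{k_r,k_s\}=\{p,q\}$ does \emph{not} force $\{p,q\}$ to be a matched pair — rather it says $p$ and $q$ each lie in a pair, one containing $r$ and the other containing $s$ (two sub-cases: $k_r=p, k_s=q$ or $k_r=q, k_s=p$); in either sub-case $\vp_p$ and $\vp_q$ are free binary choices (each pair has one free sign), so the sign $\vp_p\vp_q$ is uniformly distributed over $\{+1,-1\}$ once the rest of the configuration is fixed. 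Concretely, the sign-flip symmetry $D_p$ (negate coordinate $p$) commutes with the constraint $\{k_r,k_s\}=\{p,q\}$, preserves $\mathbf{A}_{2n}$, and sends the set $\{U\mid \vp_p\vp_q = 1,\ \{k_r,k_s\}=\{p,q\}\}$ bijectively onto $\{U\mid \vp_p\vp_q = -1,\ \{k_r,k_s\}=\{p,q\}\}$. Hence the two cardinalities are equal, which is precisely condition (ii).

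I expect the main obstacle to be purely bookkeeping: making sure that when I say ``$k_p = q$'' or ``$\{k_r,k_s\}=\{p,q\}$'' I correctly track whether a pair's sign choice is forced or free, and that the bijections $P_\sigma(\cdot)P_\sigma^{-1}$ and $D_p(\cdot)D_p^{-1}$ genuinely map $\mathbf{A}_{2n}$ into itself (i.e., that conjugation preserves the defining relations $\vp_i = -\vp_{k_i}$ and $i = k_{k_i}$). Once these routine verifications are in place, both conditions follow from the two orbit computations above, so the proof is short modulo careful counting.
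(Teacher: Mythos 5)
Your proof is correct and takes essentially the same approach as the paper: you count $\#\mathbf{A}_{2n}=(2n-1)!!\,2^n$ and $\#A_{p,q}=(2n-3)!!\,2^n$ via perfect matchings with sign choices, exactly as the paper does for condition (i). Your conjugation by the sign-flip $D_p$, which negates $\vp_p$ and $\vp_{k_p}$ while fixing the matching, is precisely the paper's bijection $\phi:A_{p,q,r,s,-1}\to A_{p,q,r,s,1}$ for condition (ii).
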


\begin{proof}
Let $n\in\N$.  We first count the number of ways to choose an
operator $U_{(\vp_i,k_i)_{i=1}^{2n}}\in\mathbf{A}_{2n}$. We may
count $(2n-1)$ ways to choose $k_1$ from
$\{1,...,2n\}\setminus\{1\}$. If $i_2$ is the first element of
$\{1,...,2n\}\setminus \{1,k_1\}$ then there are $(2n-3)$ choices
for $k_{i_2}$ from $\{1,...,2n\}\setminus\{1,k_1,i_2\}$.  Continuing
in this manner gives $(2n-1)(2n-3)...3\cdot1$ ways to choose
$(k_i)_{i=1}^{2n}$ for an operator
$U_{(\vp_i,k_i)_{i=1}^{2n}}\in\mathbf{A}_{2n}$. For each $1\leq
i\leq 2n$, there are two ways to choose $(\vp_i, \vp_{k_i})$. As
there are $n$ of these pairs, we have that,
$\#\mathbf{A}_{2n}=(2n-1)(2n-2)...3\cdot1\cdot2^n=\frac{(2n-1)(2n-2)...3\cdot1\cdot2^n
n!}{n!}=\frac{(2n)!}{n!}$.

Let $1\leq p,q\leq 2n$ such that $p\neq q$.  We now count the
elements in the set $A_{p,q}$. Given a choice of $(\vp_q,\vp_p)$ and
setting $k_p=q$ and $k_q=p$, there are $\# \mathbf{A}_{2n-2}$
choices for $(\vp_i,k_i)_{1\leq i\leq 2n, i\neq p,q}$ such that
$U_{(\vp_i,k_i)_{i=1}^{2n}}\in\mathbf{A}_{2n}$.  As there are 2
choices for $(\vp_p,\vp_q)$, we have that
$$\# A_{p,q}=2\#
\mathbf{A}_{2n-2}=2\frac{(2n-2)!}{(n-1)!}=\frac{2n(2n-1)(2n-2)!}{n(2n-1)(n-1)!}=\frac{(2n)!}{(2n-1)n!}=\frac{\#\mathbf{A}_{2n}}{2n-1}.
$$
Thus condition $i)$ is satified.  We now show that condition $ii)$
is satisfied.
 Let $1\leq p,q,r,s\leq 2n$ such that $p,q,r,\textrm{ and }s$ are all distinct integers.   For all $1\leq i\leq 2n$ and
$U_{(\vp_i,k_i)_{i=1}^{2n}}\in A_{p,q,r,s,-1}$, we let $\delta_i=-1$
if $i\in\{p,k_p\}$ and $\delta_i=1$ otherwise. We define a map
$\phi:A_{p,q,r,s,-1}\rightarrow A_{p,q,r,s,1}$ by
$\phi(U_{(\vp_i,k_i)_{i=1}^{2n}})=U_{(\delta_i\vp_i,k_i)_{i=1}^{2n}}$.
It is simple to check that this is a bijection, and hence $\#
A_{p,q,r,s,-1}=\# A_{p,q,r,s,1}$.
\end{proof}

Lemma \ref{LemmaBalanced} gives in particular that for all $n\in\N$
there exists a balanced subset of $\mathbf{A}_{2n}$.  The following
theorem then proves that there exists a moving FUNTF for $S^{2n-1}$
for every $n\in\N$.

\begin{thm}\label{ThmMain}
For all $n\in\N$, a subset $A\subset \mathbf{A}_{2n}$ is balanced if
and only if $\{f_U\}_{U\in A}$ is a moving FUNTF for $S^{2n-1}$.
\end{thm}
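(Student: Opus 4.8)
The plan is to reduce the moving-FUNTF condition to a pointwise statement at each $a\in S^{2n-1}$ and then translate it, via Facts~\ref{factBasis}--\ref{FactCoef}, into combinatorial identities about the set $A$ that turn out to be exactly the two balancedness conditions. First I would fix $a=(a_1,\dots,a_{2n})\in S^{2n-1}$ and note that $\{f_U(a)\}_{U\in A}=\{U(a)\}_{U\in A}$ is a set of $\#A$ unit vectors in the hyperplane $T_aS^{2n-1}=a^\perp$, so by Fact~\ref{FactCoef} the only candidate frame constant is $C=\#A/(2n-1)$, and by Fact~\ref{FactUnion} (applied with $Y=T_aS^{2n-1}$, $X=\mathrm{span}(a)$, and the one-element tight frame $\{a\}$ for $X$ with constant $C$, which requires scaling $a$ appropriately) the family $\{U(a)\}_{U\in A}$ is a tight frame for $T_aS^{2n-1}$ with constant $C$ if and only if $\{U(a)\}_{U\in A}\cup\{\sqrt{C}\,a\}$ is a tight frame for $\R^{2n}$ with constant $C$. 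Then Fact~\ref{factBasis} says this holds iff
\[
\sum_{U\in A}\langle U(a),e_p\rangle\langle U(a),e_q\rangle + C\,a_p a_q \;=\; C\,\delta_{p,q}\qquad\text{for all }1\le p,q\le 2n.
\]

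Next I would compute the inner products explicitly. For $U=U_{(\vp_i,k_i)_{i=1}^{2n}}\in\mathbf{A}_{2n}$ we have $\langle U(a),e_p\rangle=\vp_{k_p}a_{k_p}$, so the sum above becomes $\sum_{U\in A}\vp_{k_p}\vp_{k_q}a_{k_p}a_{k_q}$. I would split into the diagonal case $p=q$ and the off-diagonal case $p\neq q$. When $p=q$: $\vp_{k_p}^2=1$, so each term is $a_{k_p}^2$, and summing over $U\in A$ the coefficient of $a_j^2$ (for $j\neq p$) is $\#A_{p,j}=\#\{U\in A:k_p=j\}$; the identity $\sum_{U\in A}a_{k_p}^2+C a_p^2=C$ for all $a\in S^{2n-1}$ (using $\sum_j a_j^2=1$) forces $\#A_{p,j}=C=\#A/(2n-1)$ for every $j\neq p$, which is precisely condition (i) of balancedness. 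When $p\neq q$: the required identity is $\sum_{U\in A}\vp_{k_p}\vp_{k_q}a_{k_p}a_{k_q}+C a_p a_q=0$. Here I would observe $k_p\neq k_q$ always (since $U$ is a bijection on coordinates), and group the $\#A$ terms by the unordered pair $\{k_p,k_q\}=\{r,s\}$; the coefficient of the monomial $a_r a_s$ (for a fixed pair $\{r,s\}$) is $\sum_{U:\{k_p,k_q\}=\{r,s\}}\vp_{k_p}\vp_{k_q}=\#A_{r,s,p,q,1}-\#A_{r,s,p,q,-1}$ using the notation from the paper, except when $\{r,s\}=\{p,q\}$, where the extra $+C a_p a_q$ term appears. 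The monomials $a_r a_s$ over distinct unordered pairs are linearly independent as functions on the sphere, so each coefficient must vanish; for $\{r,s\}\neq\{p,q\}$ this gives $\#A_{r,s,p,q,1}=\#A_{r,s,p,q,-1}$, which is condition (ii), and for $\{r,s\}=\{p,q\}$ one checks the $+C$ contribution is absorbed consistently (or handled by also noting $k_p=q\iff k_q=p$ and reindexing). Conversely, if $A$ is balanced, substituting conditions (i) and (ii) back into the displayed identities and using $\sum_j a_j^2=1$ verifies them for all $a$.

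The main obstacle I anticipate is bookkeeping rather than conceptual: carefully passing between the ordered data $(k_p,k_q)$ and the unordered pair $\{r,s\}$, and correctly accounting for the self-paired case $\{k_p,k_q\}=\{p,q\}$ where both the extra rank-one term $C a_p a_q$ and a possible contribution from the sum coexist; one must verify that condition (ii) (with $\{r,s\}=\{p,q\}$ excluded or included appropriately) together with the structural constraint $i=k_{k_i}$ makes the $p\neq q$ identity hold exactly. A secondary subtlety is the correct application of Fact~\ref{FactUnion}: the fact as stated needs a tight frame for the orthogonal complement line $\mathrm{span}(a)$ with the same constant $C$, so I would note that $\{\sqrt{C}\,a\}$ is such a frame (it satisfies $x=\frac1C\langle x,\sqrt{C}a\rangle\sqrt{C}a$ for $x\in\mathrm{span}(a)$), and that $\sqrt{C}a$ is not required to be a unit vector since Facts~\ref{factBasis} and \ref{FactUnion} do not assume unit norm. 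Finally, I would remark that the whole argument is uniform in $a$ because the combinatorial identities extracted are independent of $a$ — this is exactly why a single balanced set $A$ yields a genuinely moving FUNTF — and combining with Lemma~\ref{LemmaBalanced} (which guarantees $\mathbf{A}_{2n}$ itself is balanced) gives the existence of a moving FUNTF for $S^{2n-1}$ for every $n\in\N$.
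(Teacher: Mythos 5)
Your proposal is correct and follows essentially the same route as the paper: the same reduction via Facts~\ref{FactUnion} and \ref{FactCoef} to the tight-frame condition for $\{\sqrt{\#A/(2n-1)}\,a\}\cup\{U(a)\}_{U\in A}$ in $\R^{2n}$, the same computation $\langle U(a),e_p\rangle=\vp_{k_p}a_{k_p}$, and the same grouping of terms by $A_{p,q}$ and $A_{p,q,r,s,\pm1}$, including the observation that the structural constraints force $\{k_p,k_q\}=\{p,q\}$ to occur only with $\vp_p\vp_q=-1$ so that the rank-one term $Ca_pa_q$ is cancelled by condition (i). The only (minor) difference is in the ``only if'' direction, where you extract the balancedness conditions by linear independence of the quadratic monomials on the sphere, whereas the paper evaluates at the explicit test point $a=\tfrac{1}{\sqrt{2}}(e_p+e_q)$; these are two packagings of the same argument.
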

\begin{proof}
Let $n\in\N$ and $A\subset \mathbf{A}_{2n}$. Note that for all $a\in
S^{2n-1}$, we have that $a^\perp=T_a S^{2n-1}$. By Facts
\ref{FactUnion} and \ref{FactCoef}, for each $a\in S^{2n-1}$,
$\{f_U(a)\}_{U\in A}$ is a FUNTF for $T_a S^{2n-1}$ if and only if
$\left\{\sqrt{\frac{\#
A}{(2n-1)}}a\right\}\cup\left\{f_U(a)\right\}_{U\in A}$ is a tight
frame for $\R^{2n}$ with frame bound $\frac{\#{A}}{2n-1}$.

Assume that $A\subset \mathbf{A}_{2n}$ is not balanced.  We will
show that $\left\{f_U(a)\right\}_{U\in A}$ is not a FUNTF for $T_a
S^{2n-1}$ by proving that  $\left\{\sqrt{\frac{\#
A}{(2n-1)}}a\right\}\cup\left\{f_U(a)\right\}_{U\in A}$ is not a
tight frame for $\R^{2n}$.  As $A$ is not balanced, it fails either
condition $i)$ or condition $ii)$. We first assume that $A$ fails
condition $i)$.
 We have
that there exists $1\leq p,q\leq 2n$ such that $\#
A_{p,q}\neq\frac{\# A}{2n-1}$.  We will use Fact \ref{factBasis}
 to show that $\{\sqrt{\frac{\#
A}{(2n-1)}}\frac{1}{\sqrt{2}}(e_p+e_q)\}\cup\{U(\frac{1}{\sqrt{2}}(e_p+e_q))\}_{U\in
A}$ is not a tight frame for
$T_{\frac{1}{\sqrt{2}}(e_p+e_q)}S^{2n-1}$ by showing that the
reconstruction formula applied to $e_p$ is not orthogonal to $e_q$.
We let $a=\frac{1}{\sqrt{2}}(e_p+e_q)$.
\begin{align*} \frac{\#
A}{(2n-1)}\langle a, e_p\rangle\langle a, e_q\rangle+  \sum_{U\in A}
\langle U a,e_p\rangle  \langle U a, e_q\rangle
 &= \frac{\#A}{(2n-1)}\frac{1}{2}+ \sum_{U\in A_{p,q}}\vp_{k_q}\vp_{k_p}\frac{1}{\sqrt{2}}\frac{1}{\sqrt{2}}\\
 &= \frac{\#A}{(2n-1)}\frac{1}{2}+ \sum_{U\in
 A_{p,q}}-\frac{1}{2}\quad\quad\textrm{ as }k_p=q\\
 &=\frac{1}{2}\left(\frac{\#A}{(2n-1)}-
 \#A_{p,q}\right)\neq0
\end{align*}
Thus, $\left\{\sqrt{\frac{\#
A}{(2n-1)}}\frac{1}{\sqrt{2}}(e_p+e_q)\right\}\cup\left\{U(\frac{1}{\sqrt{2}}(e_p+e_q))\right\}_{U\in
A}$ is not a tight frame for $\R^{2n}$ by Fact \ref{factBasis} and
hence $(f_U)_{U\in A}$ is not a moving FUNTF for $S^{2n-1}$.  We now
assume that $A$ fails condition $ii)$.  There exist distinct
integers $1\leq p,q,r,s\leq 2n$  such that $\# A_{p,q,r,s,-1}\neq\#
A_{p,q,r,s,1}$.  We will show that $\left\{\sqrt{\frac{\#
A}{(2n-1)}}\frac{1}{\sqrt{2}}(e_p+e_q)\right\}\cup\left\{U(\frac{1}{\sqrt{2}}(e_p+e_q))\right\}_{U\in
A}$ is not a tight frame for
$T_{\frac{1}{\sqrt{2}}(e_p+e_q)}S^{2n-1}$ by showing that the
reconstruction formula applied to $e_r$ is not orthogonal to $e_s$.
We let $a=\frac{1}{\sqrt{2}}(e_p+e_q)$.
\begin{align*}\frac{\#
A}{(2n-1)}  \langle a, e_r\rangle  \langle a, e_s\rangle+\sum_{U\in
A}\langle U a,e_r\rangle  \langle U a, e_s\rangle&=0+ \sum_{U\in
A_{p,q,r,s,1}\cup A_{p,q,r,s,-1}}\langle U a,e_r\rangle\langle U a, e_s\rangle\\
&= \sum_{U\in A_{p,q,r,s,1}}\frac{1}{2}+\sum_{U\in
A_{p,q,r,s,-1}}-\frac{1}{2}\\
&= \frac{1}{2}(\# A_{p,q,r,s,1}-\# A_{p,q,r,s,-1})\neq0
\end{align*}
Thus, $\left\{\sqrt{\frac{\#
A}{(2n-1)}}\frac{1}{\sqrt{2}}(e_p+e_q)\right\}\cup\left\{U(\frac{1}{\sqrt{2}}(e_p+e_q))\right\}_{U\in
A}$ is not a tight frame for $\R^{2n}$ by Fact \ref{factBasis} and
hence $(f_U)_{U\in A}$ is not a moving FUNTF for $S^{2n-1}$.  Thus,
if $A\subset \mathbf{A}_{2n}$ is not balanced then $(f_U)_{U\in A}$
is not a moving FUNTF for $S^{2n-1}$.

Assume that $A\subset \mathbf{A}_{2n}$ is balanced.  We will show
that $\left\{\sqrt{\frac{\# A}{(2n-1)}}a\right\}\cup\{U(a)\}_{U\in
A}$ is a tight frame for $R^{2n}$ for all $a\in S^{2n-1}$.   Let
$a=(a_i)_{i=1}^{2n}\in S^{2n-1}\subset \R^{2n}$ and $1\leq p,q\leq
2n$ with $p\neq q$.

\begin{align*} \frac{\#
A}{(2n-1)}&\langle a, e_p\rangle\langle  a, e_q \rangle+\sum_{U\in
A}\langle U a,e_p\rangle \langle U a, e_q\rangle = \frac{\#
A}{(2n-1)}a_p a_q +
\sum_{U\in A}\vp_{k_p}\vp_{k_q}a_{k_p}a_{k_q}\\
&= \frac{\# A}{(2n-1)}a_p a_q +\sum_{U\in A_{p,q}}-a_{q}a_{p}
+\sum_{\substack{r\neq p,r\neq q \\ s\neq p,s\neq q}}\left(
\sum_{U\in A_{p,q,r,s,1}}a_{r}a_{s}+\sum_{U\in A_{p,q,r,s,-1}}-a_{r}a_{s}\right)\\
&= \frac{\# A}{(2n-1)}a_p a_q -\#A_{p,q}a_{q}a_{p}
+0=0\quad\quad\textrm{ as }A\textrm{ is balanced.}\\
\end{align*}
Thus the reconstruction formula applied to $e_p$ is orthogonal to
$e_q$.  We now let $a=(a_i)_{i=1}^{2n}\in S^{2n-1}\subset \R^{2n}$
and $1\leq p\leq 2n$.

\begin{align*} \frac{\#
A}{2n-1}\langle a, e_p\rangle^2+\sum_{U\in A}\langle U
a,e_p\rangle^2  &= \frac{\# A}{2n-1}a^2_p +
\sum_{U\in A}a_{k_p}^2\\
&= \frac{\# A}{2n-1}a^2_p +
\sum_{q\neq p}\sum_{U\in A_{p,q}}a_{q}^2\\
&= \frac{\# A}{2n-1}a^2_p + \sum_{q\neq p}\frac{\# A}{2n-1}a_{q}^2\quad\quad\textrm{ as $A$ is balanced.}\\
&= \frac{\# A}{2n-1}\quad\quad\textrm{ as }\|a\|=1
\end{align*}
Thus, $\left\{\sqrt{\frac{\#
A}{(2n-1)}}a\right\}\cup\left\{U(a)\right\}_{U\in A}$ is a tight
frame for $R^{2n}$ with frame bound $\frac{\# A}{2n-1}$ by Fact
\ref{factBasis}. This gives us that $\{U(a)\}_{U\in A}$ is a tight
frame for $T_a S^n$ for all $a\in S^n$ by Fact \ref{FactUnion}, and
hence $\{f_U\}_{U\in A}$ is a moving FUNTF for $S^{2n-1}$.
\end{proof}

By Lemma \ref{LemmaBalanced} and Theorem \ref{ThmMain}, we have for
all $n\in\N$, that $(f_U)_{U\in \mathbf{A}_{2n}}$ is a moving FUNTF
for $S^{2n-1}$.  Thus, as in the proof of Lemma \ref{LemmaBalanced},
$S^{2n-1}$ has a moving FUNTF of $\frac{(2n)!}{n!}$ vector fields
for all $n\in\N$.  The natural next step is to find FUNTFs comprised
of fewer vector fields.  That is, our goal is to now find a proper
subset of $\mathbf{A}_{2n}$ which is balanced.

\begin{thm}
For all $n\in\N$, there exists a balanced subset $A\subset
\mathbf{A}_{2n}$ with $\# A=(2n-1) 2^{n-1}$. In particular,
$S^{2n-1}$ has a moving FUNTF of $(2n-1)2^{n-1}$ vector fields for
all $n\in\N$.
\end{thm}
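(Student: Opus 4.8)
The plan is to exhibit an explicit balanced subset of $\mathbf{A}_{2n}$ of the required size by building it recursively on $n$. Observe that a general element $U_{(\vp_i,k_i)_{i=1}^{2n}}\in\mathbf{A}_{2n}$ decomposes the index set $\{1,\dots,2n\}$ into $n$ transposition-pairs together with a choice of sign for each pair; the sign data contributes the factor $2^n$, and the pairing data contributes $(2n-1)(2n-3)\cdots 3\cdot 1$, giving $\#\mathbf{A}_{2n}=(2n-1)!!\,2^n$, as computed in the proof of Lemma~\ref{LemmaBalanced}. The target size $(2n-1)2^{n-1}$ is smaller by a factor of $(2n-3)!!\,2$, so we must cut down \emph{both} the pairing freedom and the sign freedom. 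The natural idea is: fix a single clever family of $2n-1$ pairings of $\{1,\dots,2n\}$ that is itself ``balanced'' in the sense of condition~i) (every ordered pair $(p,q)$ with $p\neq q$ should be matched in exactly $\frac{2n-1}{2n-1}=1$ of these pairings — i.e. a $1$-factorization of the complete graph $K_{2n}$), and then, for each such pairing, attach exactly $2^{n-1}$ of the $2^n$ sign patterns, chosen so that condition~ii) holds.

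First I would recall that $K_{2n}$ has a $1$-factorization into $2n-1$ perfect matchings (the standard round-robin tournament construction: place $1,\dots,2n-1$ on a circle with $2n$ at the center, and rotate). Call these matchings $M_1,\dots,M_{2n-1}$. Each $M_j$ determines the pairing data of a would-be operator, and I must now choose a set $E_j\subset\{-1,1\}^{2n}$ of sign patterns compatible with $M_j$ (meaning $\vp_i=-\vp_{k_i}$ on each pair, so effectively one free $\pm1$ per pair, hence $2^n$ a priori) with $\#E_j=2^{n-1}$. The choice: within each matching, the $n$ pairs carry $n$ free signs $\eta_1,\dots,\eta_n\in\{-1,1\}$; take $E_j$ to be the set of patterns with $\prod_{\ell=1}^n\eta_\ell=1$ (an even-sign constraint), which has exactly $2^{n-1}$ elements. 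Define $A=\bigsqcup_{j=1}^{2n-1}\{U_{M_j,\eta}:\eta\in E_j\}$, so $\#A=(2n-1)2^{n-1}$.

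It remains to verify that this $A$ is balanced. Condition~i): for distinct $p,q$, the operators in $A$ with $k_p=q$ are exactly those coming from the unique matching $M_j$ in which $p$ and $q$ are paired, and all $2^{n-1}$ sign choices in $E_j$ are allowed, so $\#A_{p,q}=2^{n-1}=\frac{\#A}{2n-1}$. Condition~ii): fix distinct $p,q,r,s$ and consider the matchings $M_j$ in which $\{r,s\}$ is one of the pairs — call such a matching \emph{relevant}. For a relevant $M_j$, $p$ and $q$ lie in two \emph{other} pairs of $M_j$, each carrying its own free sign; I claim the even-sign constraint splits $E_j$ evenly between $\vp_p\vp_q=1$ and $\vp_p\vp_q=-1$. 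This is the crux: I would argue it by noting that flipping the free sign on the pair containing $p$ is a sign-preserving involution on $\{-1,1\}^{2n}$ that toggles $\vp_p\vp_q$ but, since it flips exactly one of the $n$ free signs, it reverses the parity $\prod\eta_\ell$ — so naively it maps $E_j$ off itself. The fix is to compose with flipping the free sign on the pair containing $q$ as well: flipping both free signs together preserves the parity $\prod\eta_\ell$ (two flips), keeps us inside $E_j$, and still toggles $\vp_p\vp_q$ (since it changes $\vp_p$ and fixes $\vp_q$, or vice versa, depending on which endpoint of each pair $p$ and $q$ are — in any case it changes the product $\vp_p\vp_q$). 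Hence this is a fixed-point-free involution of $E_j$ pairing $\{\vp_p\vp_q=1\}$ with $\{\vp_p\vp_q=-1\}$, and summing over all relevant $M_j$ gives $\#A_{p,q,r,s,1}=\#A_{p,q,r,s,-1}$. I expect the main obstacle to be exactly this condition~ii) bookkeeping — making sure the parity of the global sign pattern $(\vp_i)$ that appears in condition~ii) is tracked correctly through the round-robin matchings and that the involution is genuinely fixed-point-free on $E_j$; once the involution is set up cleanly, balancedness follows, and then Theorem~\ref{ThmMain} immediately yields that $\{f_U\}_{U\in A}$ is a moving FUNTF of $(2n-1)2^{n-1}$ vector fields for $S^{2n-1}$.
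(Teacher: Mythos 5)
Your overall architecture matches the paper's: take a $1$-factorization $M_1,\dots,M_{2n-1}$ of $K_{2n}$ for the pairing data (the paper builds exactly this object, as the set $B$, via an explicit symmetric $2n\times 2n$ matrix with zero diagonal whose rows are permutations of $\{0,\dots,2n-1\}$), and then attach to each matching half of the $2^n$ admissible sign patterns. The difference, and the problem, lies in which half you pick and in the involution used to verify condition ii). First, a misreading: $\{k_r,k_s\}=\{p,q\}$ does not say that $\{r,s\}$ is a pair of the matching; it says $r$ is paired with one of $p,q$ and $s$ with the other, so the relevant matchings are those containing the pairs $\{r,p\},\{s,q\}$ or $\{r,q\},\{s,p\}$ (in particular $p$ and $q$ always lie in two distinct pairs). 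More seriously, your involution does not toggle $\vp_p\vp_q$. Because the constraint $\vp_i=-\vp_{k_i}$ locks the two signs of a pair together, flipping the free sign of the pair containing $p$ negates $\vp_p$ (whichever endpoint of its pair $p$ happens to be), and flipping the free sign of the pair containing $q$ negates $\vp_q$; the composition therefore sends $\vp_p\vp_q$ to $(-\vp_p)(-\vp_q)=\vp_p\vp_q$. Your map preserves the very quantity it is supposed to toggle, so the verification of condition ii) collapses. Moreover this is not merely a repairable slip in the write-up: the construction itself fails at $n=2$. Take the matchings $\{\{1,2\},\{3,4\}\}$, $\{\{1,3\},\{2,4\}\}$, $\{\{1,4\},\{2,3\}\}$ of $K_4$, with the free sign of each pair taken at its smaller index, and set $p=1$, $q=2$, $r=3$, $s=4$. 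The relevant matchings are the latter two, and in each of them the even-parity constraint forces $\vp_1=\vp_2$, so all four relevant operators satisfy $\vp_1\vp_2=1$ and $\#A_{1,2,3,4,1}=4\neq 0=\#A_{1,2,3,4,-1}$.

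The paper's choice of the ``half'' avoids this: for each matching it keeps exactly the $2^{n-1}$ sign patterns with $\vp_1=1$. Then in any relevant matching at most one of the two (distinct) pairs containing $p$ and $q$ can contain the index $1$, so the sign of the other such pair can be flipped independently; that single flip is a fixed-point-free involution on the admissible patterns which genuinely toggles $\vp_p\vp_q$, yielding condition ii). (Your even-parity idea can be salvaged for $n\geq 3$ by instead flipping the pair containing $p$ together with some pair containing neither $p$ nor $q$, but no such third pair exists when $n=2$.)
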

\begin{proof}
Let $n\in\N$.  Our first goal is to create a subset $B\subset
\{1,..,2n\}^{2n}$ such that
\begin{enumerate}
\item[i)] If $(k_i)_{i=1}^{2n}\in B$ then $k_i\neq i$ and $k_{{k_i}}=i$ for all $1\leq i\leq
2n$\\
\item[ii)] If $1\leq r,s\leq 2n$ and $r\neq s$ then there exists $(k_i)_{i=1}^{2n}\in B$
and $1\leq i\leq 2n$ such that $k_r=s$\\
\item[iii)] If $(k_i)_{i=1}^{2n},(\ell_i)_{i=1}^{2n}\in B$ then
$k_i\neq\ell_i$ for all $1\leq i\leq 2n$\\
\end{enumerate}
Essentially, $B$ would be a set of ways to pair up the components of
$\R^{2n}$ such that any two components are paired up by exactly one
permutation from $B$. Assuming we have created such a set $B$, we
may define a set $A$ by
$$A=\left\{U_{(\vp_i,k_i)_{i=1}^{2n}}\in\mathbf{A}_{2n}:\vp_{1}=1,\, \vp_i=-\vp_{k_i},\,(k_i)_{i=1}^{2n}\in B. \right\}$$
That is, we define $A$ by pairing up each permutation in $B$ with
all possible sequences $(\vp_i)_{i=1}^{2n}$ such that $\vp_1=1$. Due
to the restrictions on $B$, it is straightforward to check that the
resulting set $A$ will be balanced. The set $B$ contains $2n-1$
elements, and for each choice of $(k_i)_{i=1}^{2n}\in B$ there are
$2^{n-1}$ choices for $(\vp_i)_{i=1}^{2n}$.  Thus, $\# A=(2n-1)
2^{n-1}$.  All that remains is to show that such a set $B$ exists.

To create $B$, we will construct a $2n\times2n$ symmetric matrix $M$
such that every row (and column) of $M$ is a permutation of
$\{0,1,...,2n-1\}$ and the diagonal of $M$ is constant 0.  Given
such a matrix $M=[m_{i,j}]_{1\leq i,j\leq 2n}$, for all $1\leq
i,j<2n$ we let $k^j_i$ be the column number whose entry in the $i$th
row of $M$ equals $j$. Then we may set
$B:=\left\{(k^j_i)_{i=1}^{2n}\right\}_{1\leq j<2n}$. We have that
$i)$ is satisfied as $M$  has 0 diagonal and is symmetric. We have
that $ii)$ is satisfied as if $1\leq r,s\leq 2n$ with $r\neq s$ then
$k^{m_{r,s}}_{r}=s$.  We have that $iii)$ is satisfied as each row
of $M$ is a permutation of $\{0,1,...,2n-1\}$.

Thus, to create $B$, we need to construct a $2n\times2n$ symmetric
matrix $M$ such that every row (and column) of $M$ is a permutation
of $\{0,1,...,2n-1\}$ and the diagonal of $M$ is constant 0.  We
create $M=[m_{i,j}]_{1\leq i,j\leq 2n}$ by defining $m_i,j$ for each
$1\leq i,j\leq 2n$ by
\begin{center}
$m_{i,j} = \begin{cases}
0 & \text{if $i = j$},\\
(i+j-2) \text{ mod}(2n-1)+1 & \text{if $i\neq j$ and $1\leq i,j < 2n$},\\
(2i-2)\text{ mod}(2n-1)+1 & \text{if $j = 2n$ and $1 \leq i < n$},\\
(2j-2)\text{ mod}(2n-1)+1 & \text{if $i = 2n$ and $1 \leq j < n$}.\\
\end{cases}$
\end{center}

If we were to write out the matrix, it would look like:
\begin{center}
$\left( \begin{array}{c c c c c c c c c c c c c c}
0 & 2 & 3 & 4 & \cdots & &  &\cdots & 2n\!-\!4& 2n\!-\!3 & 2n\!-\!2 & 2n\!-\!1  & 1\\
2 & 0 & 4 & 5 & \cdots & &  &\cdots & 2n\!-\!3& 2n\!-\!2 & 2n\!-\!1 & 1 & 3 \\
3 & 4 & 0 & 6 & \cdots &  &  & \cdots & 2n\!-\!2& 2n\!-\!1 & 1 & 2 & 5 \\
 \vdots & & & \ddots & & & & &   &  & & &  \vdots \\
2n-3 & 2n\!-\!2 & 2n\!-\!1 & 1 & \cdots & && \cdots & 2n\!-\!7 & 0 & 2n\!-\!5 & 2n\!-\!4  & 2n\!-\!6\\
 2n\!-\!2 & 2n\!-\!1 & 1 & 2 & \cdots & && \cdots & 2n\!-\!6& 2n\!-\!5 & 0 & 2n\!-\!3   & 2n\!-\!4\\
2n\!-\!1 & 1 & 2 & 3 & \cdots & &  & \cdots&  2n\!-\!5 & 2n\!-\!4 & 2n\!-\!3 &   0 & 2n\!-\!2 \\
1 & 3 & 5 & 7 & \cdots & 2n\!-\!1 &2&\cdots &2n\!-\!8 & 2n\!-\!6&2n\!-\!4& 2n\!-\!2 & 0\\
\end{array} \right)$
\end{center}

If we switch the variables $i$ and $j$ in the definition of
$m_{i,j}$, we leave $m_{i,j}$ unchanged. Hence, $M$ is symmetric.
Setting $m_{i,j}=0$ if $i=j$ guarantees that $M$ has constant 0
diagonal.  To show that each row of $M$ is a permutation of
$\{0,1,...,2n-1\}$ we first consider $1\leq i< 2n$. The sequence
$\left((i+j-2) \text{ mod}(2n-1)\right)_{j=1}^{2n-1}$ is a
permutation of $(j)_{j=0}^{2n-2}$ and hence $\left((i+j-2) \text{
mod}(2n-1)+1\right)_{j=1}^{2n-1}$ is a permutation of
$(j)_{j=1}^{2n-1}$.  The $i$th row of $M$ is formed by concatenating
$0$ as the $(2n)$th element of the sequence $\left((i+j-2) \text{
mod}(2n-1)+1\right)_{j=1}^{2n-1}$ then  switching the $i$th and the
$(2n)$th element.  Thus the $i$th row of $M$ is a permutation of
$\{0,1,...,2n-1\}$ when $1\leq i<2n$.  For the case $i=2n$, we have
that $\left((2j-2)\text{ mod}(2n-1)\right)_{j=1}^{2n-1}$ is a
permutation of $(j)_{j=0}^{2n-2}$ as $2$ and $2n-1$ are relatively
prime, and hence $\left((2j-2)\text{
mod}(2n-1)+1\right)_{j=1}^{2n-1}$ is a permutation of
$(j)_{j=1}^{2n-1}$.  The $(2n)$th row of $M$ is formed by
concatenating $0$ as the $(2n)$th element of the sequence
$\left((2j-2)\text{ mod}(2n-1)+1\right)_{j=1}^{2n-1}$, and hence is
a permutation of $(j)_{j=0}^{2n-1}$.  Thus, we have formed a matrix
$M$ satisfying all our desired properties, and the proof is
complete.

\end{proof}

\end{document}